\newtheorem{theorem}{Theorem}[section]
\newtheorem{prop}[theorem]{Proposition}
\newtheorem{corollary}[theorem]{Corollary}
\theoremstyle{definition}
\theoremstyle{remark}
\newtheorem{remark}[theorem]{Remark}
\numberwithin{equation}{section}
\newcommand{\RR}[1]{\mathbb{#1}}
\newcommand{\rr}{{\mathbb R}}
\def\R{{\mathbb R}}
\def\a{\alpha}
\def\E{{\mathbb E}}
\def\P{{\mathbb P}}
\begin{document}

\title{\bf laws of the iterated logarithm  for a class of iterated processes}


\author{Erkan Nane}
\address{Erkan Nane, Mathematics and Statistics, Auburn University, 221 Parker Hall,  Auburn, AL 36849}
\email{nane@auburn.edu}



\begin{abstract}Let $X=\{X(t), t\geq 0\}$ be a Brownian motion or a spectrally negative stable process of index $1<\a<2$.
Let $E=\{E(t),t\geq 0\}$ be the hitting time of a stable subordinator of index $0<\beta<1$ independent of $X$.
We use a connection between $X(E(t))$ and  the stable subordinator of
index $\beta/\a$ to derive information on the path behavior of $X(E_t)$. This is an extension of the connection of iterated Brownian motion and ($\frac{1}{4}$)-stable
subordinator due to
 Bertoin \cite{bertoin}.
 Using this connection, we obtain various laws of the iterated logarithm for $X(E(t))$. In particular, we establish law of the iterated logarithm for local time Brownian motion, $X(L(t))$, where $X$ is a Brownian motion (the case $\a=2$) and  $L(t)$ is the local time at zero of a stable process $Y$ of index $1<\gamma\leq 2$ independent of $X$. In this case $E(\rho t)=L(t)$ with $\beta=1-1/\gamma$ for some constant $\rho>0$.  This establishes the lower bound in the law of the iterated logarithm which we could not prove with the techniques of our paper  \cite{MNX}.  We also obtain exact small ball probability for $X(E_t)$ using ideas from \cite{aurzada}.
\end{abstract}

\keywords{Local time Brownian motion, hitting time of stable subordinator, spectrally negative stable process, law of the iterated logarithm, Hirsch's integral test, Kolmogorov's integral test, small ball probability.}

\maketitle

\newpage

\newpage
\section{Introduction}
Self-similar processes arise naturally in limit theorems of random walks and
other stochastic processes, and they have been applied to model various phenomena
in a wide range of scientific areas including telecommunications, turbulence,
image processing and finance. In this paper, we will study a class of self similar processes that are non-Gaussian and non-Markovian.

 To define the processes studied in this paper, let $X=\{X(t), t\geq 0\}$ be a Brownian motion or a spectrally negative stable L\'evy process of index $1<\a<2$ started at $0$. When $\a=2$, we denote $X=W$.
Let $E=\{E(t),t\geq 0\}$ be the hitting time of a stable subordinator of index $0<\beta<1$ independent of $X$.
Our aim in this paper is to obtain various laws of the iterated logarithm (LIL) for the process $Z=\{Z(t)=X(E(t)),\, t \ge 0\}$. This process is self-similar with index $H= \beta/\a$.

Many authors have constructed and investigated various classes of non-Gaussian
self-similar processes. See, for example, \cite{ST94} for information on self-similar
stable processes with stationary increments. Let $W'$ be an independent copy of $W$.  Define $B^1(t)=W(t), t\geq 0$ and $B^1(t)=W'(-t),\ t\leq 0$.  Burdzy \cite{burdzy1, burdzy2}
introduced the so-called \emph{iterated Brownian motion} (IBM), $A(t)=B^1(B(t))$, by replacing the time
parameter in $B^1(t)$ by an independent one-dimensional Brownian motion $B= \{B(t),
t \ge 0\}$. His work inspired  many researchers to explore the
connections between IBM (or other iterated processes) and PDEs,
 to establish potential theoretical
results
and to study its sample path properties, see \cite{allouba2,nane,nane2,nane3,nane5,nane6,xiao} and references therein.

The process $Z$ emerges as the scaling
limit of a continuous time random walk with heavy-tailed waiting times
between jumps \cite{coupleCTRW, limitCTRW}. Moreover,  $Z$ has a close connection to fractional partial
differential equations. Baeumer and Meerschaert \cite{fracCauchy}, and Meerschaert and Sheffler \cite{limitCTRW} showed
that the process $Z$ can be applied to provide a solution to the fractional
Cauchy problem. More precisely,
they proved that,
  $u(t,x)=\E_x [f(Z(t))]$ solves
the following fractional in time PDE
\begin{equation}\label{frac-derivative-0}
\frac{\partial^\beta}{\partial t^\beta}u(t,x) =
{G_x}u(t,x); \quad u(0,x) = f(x),
\end{equation}
where $G_x$ is the generator of the semigroup of $X_t$ and
$\partial^{\beta} g(t)/\partial t^\beta $ is
the Caputo fractional derivative in time, which can be defined as the inverse
Laplace transform of $s^{\beta}\tilde{g}(s)-s^{\beta -1}g(0)$, where
$\tilde{g}(s)=\int_{0}^{\infty}e^{-st}g(t)dt$ is the usual Laplace
transform.
Let $L(t)$ be the local time
at zero of a strictly stable process $Y(t)$ of index $1<\gamma\leq
2$ independent of $X$. The process, $W(L(t))$ is called  \emph{local time Brownian motion} in \cite{MNX}.
Recently Baeumer, Meerschaert and Nane \cite{bmn-07} further
established the
equivalence of the governing PDEs of $X(L(t))$ and $X(|B(t)|)$ when
 $\beta=1/2$. Here $B=\{B(t), t \ge 0\}$ is another Brownian
motion independent of $X$ and $Y$. When $\gamma=2$, the process $W(L(t))$ has also appeared in
the works of Borodin \cite{borodin1, borodin2}, Ikeda and Watanabe
\cite{ike-wata}, Kasahara \cite{kasahara}, and Papanicolaou et al.
\cite{papanicolaou}.

We are interested in sample path properties of $Z$. Since $Z$ is non-Gaussian,
non-Markovian and does not have stationary increments, the existing
theories on Markov and/or Gaussian processes can not be applied to $Z$
directly and some new tools need to be developed. In this regard, we mention that
Cs\'{a}ki,  F\"{o}ldes and R\'{e}v\'{e}sz   \cite{CCFR2} studied the Strassen
type LIL of $Z(t) = W(L(t))$ when $L(t)$ is the local
time at zero of a symmetric stable L\'{e}vy process; Meerschaert, Nane and Xiao
\cite{MNX} established a large deviation result, and uniform modulus
of continuity for $W(L(t))$. In particular they obtained upper bounds in  the LIL and  the modulus of continuity of $W(L(t))$. In this paper, we will establish  LIL results for $Z$.

This paper is a continuation of \cite{CCFR2} and \cite{MNX}. In section 2, we study small ball probability for $Z(t)$. Our results in Section 2 imply in particular that there can not be a Chung type LIL for $Z$ due to power decay in the small ball probability. In section 3, we study asymptotic
properties of $Z=\{Z(t),\, t \ge 0\}$ by first establishing a connection to a stable subordinator of index $\beta/\a$, which is an extension of the connection of iterated
Brownian motion and $\frac1 4$-stable subordinator due to Bertoin \cite{bertoin}
and then making use of limiting theorems on stable subordinators. \\

\section{Small ball Probability}
The study of small ball probability is useful  for studying Chung type LIL. In this section we show that there cannot be a Chung type LIL for $Z$.

In what follows  we will write $f(t) \approx g(t)$ as $t\to a$  to mean
that
for some positive $C_{1}$ and $C_{2}$, $C_{1}\leq \liminf_{t\to a}f(t)/g(t)\leq \limsup_{t\to a}f(t)/g(t) \leq C_{2}$.  We will also write $f(t) \sim
g(t)$, as
$t\to a $,  to mean $\lim_{t\to a}f(t) / g(t) \rightarrow 1$.


Next we introduce the processes that will be studied in this paper.
A L\'{e}vy process $X=\{X(t), t \ge 0\}$ with values in $\rr$ is called strictly
stable of index $\alpha \in (0, 2]$, $\a\neq 1$ if its characteristic function is given by
\begin{equation}\label{Eq:Chf}
\E\big[\exp (i\xi X(t))\big] = \exp\left(-t |\xi|^{\alpha} \frac{1+i \nu {\rm sgn}
(\xi)\tan (\frac{\pi\alpha}{2})}{\chi}\right),
\end{equation}
where $-1\leq  \nu \leq 1$ and $\chi >0$ are constants. In the terminology of
\cite[Definition 1.1.6]{ST94},
$-\nu$ and $\chi^{-1/\a}$ are respectively the skewness and scale parameters
of the stable random variable $X(1)$. When $\a=2$ and $\chi = 2$, $X$ is a standard Brownian
motion and, if $\chi = 1$, $X$ is a Brownian
motion running at twice the speed of standard Brownian motion and  we denote it by $W$. When $1<\a <2 $ and $\nu=1$, $X$ is called spectrally negative stable L\'evy process.

In general, many properties of stable L\'evy processes can be
characterized by the parameters $\alpha, \nu$ and $\chi$. For a systematic account
on L\'evy processes we refer to \cite{Bertoin96}.


Let  $E(t)$ be the continuous inverse of a
stable subordinator $D_t$ of index $0<\beta <1$:
$$
E(t)=\inf \{s:\ D(s)>t\}, \ (t\geq 0).
$$
$E(t)$ has also been called  the hitting time of stable subordinator of index $0<\beta<1$ in the literature, see, for  example, \cite{coupleCTRW}.

The next theorem is our first result on the small ball probability of $W(E(t))$.
\begin{theorem}\label{small-w}
Let $W(t)$ be a Brownian motion. Let  $E(t)$ be the continuous inverse of a
stable subordinator $D(t)$ of index $0<\beta <1$. Then
$$\lim_{u\to 0}u^{-2} \P\left(\sup_{0\leq t \leq 1}
|W(E(t))|\leq u \right)=\frac{32\Gamma (\beta )\sin(\beta \pi)}{\pi^4}\sum_{k=1}^{\infty}\frac{(-1)^{k-1}}{(2k-1)^{3}}.$$
\end{theorem}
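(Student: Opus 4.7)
The plan is to reduce the quantity to an expectation of a Brownian exit probability and then extract the asymptotic through the Laplace transform of $E(1)$.

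First, since $E$ is continuous and non-decreasing with $E(0)=0$, the image of $[0,1]$ under $E$ equals $[0,E(1)]$, so
\[
\sup_{0\le t\le 1}|W(E(t))| = \sup_{0\le s\le E(1)}|W(s)|.
\]
Conditioning on $E(1)$, using independence of $W$ and $E$, and invoking the classical series formula
\[
\P\!\left(\sup_{0\le s\le T}|W(s)|\le u\right) = \frac{4}{\pi}\sum_{k=0}^{\infty}\frac{(-1)^{k}}{2k+1}\exp\!\left(-\frac{(2k+1)^{2}\pi^{2}T}{8u^{2}}\right),
\]
I obtain, after Fubini (justified by the super-geometric decay in $k$),
\[
\P\!\left(\sup_{0\le t\le 1}|W(E(t))|\le u\right) = \frac{4}{\pi}\sum_{k=0}^{\infty}\frac{(-1)^{k}}{2k+1}\,\E\!\left[\exp\!\left(-\frac{(2k+1)^{2}\pi^{2}E(1)}{8u^{2}}\right)\right].
\]

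Next, I bring in the identity $\E[e^{-\theta E(1)}] = E_{\beta}(-\theta)$, where $E_{\beta}(z)=\sum_{n\ge 0} z^{n}/\Gamma(\beta n+1)$ is the Mittag-Leffler function; this follows from inverting the joint Laplace transform $\int_{0}^{\infty}e^{-\lambda t}f_{E(t)}(s)\,dt=\lambda^{\beta-1}e^{-s\lambda^{\beta}}$. I then use its asymptotic
\[
E_{\beta}(-\theta) = \frac{1}{\Gamma(1-\beta)\,\theta}+O(\theta^{-2})\qquad\text{as }\theta\to\infty,
\]
and the uniform estimate $0\le E_{\beta}(-\theta)\le C/(1+\theta)$ for $\theta\ge 0$, which follows from complete monotonicity of $\theta\mapsto E_{\beta}(-\theta)$ (Pollard) combined with the asymptotic. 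With $\theta_{k}(u)=(2k+1)^{2}\pi^{2}/(8u^{2})\to\infty$, each summand of $u^{-2}\P(\sup|W(E(t))|\le u)$ converges to
\[
\frac{4}{\pi}\cdot\frac{(-1)^{k}}{2k+1}\cdot\frac{8}{\pi^{2}(2k+1)^{2}\Gamma(1-\beta)} = \frac{32(-1)^{k}}{\pi^{3}\Gamma(1-\beta)(2k+1)^{3}},
\]
and the uniform bound yields $u^{-2}E_{\beta}(-\theta_{k}(u))\le 8C/(\pi^{2}(2k+1)^{2})$, so the $k$-th term is dominated by an $O((2k+1)^{-3})$ sequence. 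Dominated convergence then legitimizes interchanging limit and summation.

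Finally, I apply the Euler reflection formula $\Gamma(\beta)\Gamma(1-\beta)=\pi/\sin(\pi\beta)$ to rewrite $1/\Gamma(1-\beta)=\Gamma(\beta)\sin(\pi\beta)/\pi$, and re-index $k\mapsto k-1$ to match the form in the statement. The main (and essentially only substantive) technical point is producing a summable dominant so that the term-wise Mittag-Leffler asymptotic can be summed to the global one; everything else rests on classical formulas for Brownian motion in a symmetric strip and for the Laplace transform of the inverse stable subordinator.
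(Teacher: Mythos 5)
Your argument reproduces the paper's proof almost line for line: reduce via continuity of $E$, condition on $E(1)$, apply Chung's series for the Brownian exit probability, recognize $\E[e^{-\theta E(1)}]=E_\beta(-\theta)$, and pass to the limit via dominated convergence using the uniform bound $E_\beta(-\theta)\le C/(1+\theta)$ together with the $\theta^{-1}$ asymptotic. The only cosmetic differences are the index shift $k\mapsto k-1$ and that you write the leading constant as $1/\Gamma(1-\beta)$ and invoke Euler reflection, whereas the paper cites Kr\"ageloh's form $\Gamma(\beta)\sin(\beta\pi)/\pi$ directly.
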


\begin{proof}
From a well-known formula (see
Chung \cite{chung}):
$$
\P[\ \sup_{0\leq t \leq 1} |W(t)|\leq u \
]=\frac{4}{\pi}\sum_{k=1}^{\infty}\frac{(-1)^{k-1}}{2k-1}\exp
\left[ -\frac{(2k-1)^{2}\pi^{2}}{8u^{2}}\right],
$$

Since $E(t)$ is continuous and
nondecreasing, we use conditioning to get

\begin{eqnarray}
\P[\ \sup_{0\leq t\leq 1} |Z(t)|\leq u \ ]& =&
\P[\ \sup_{0\leq t\leq E(1)} |W(t)|\leq u  ] \nonumber\\
&=& \E\left[ \P\left( \sup_{0\leq t\leq 1} |W(t)|\leq
\frac{u}{  \sqrt{E(1)}}|E(1) \right) \right]\nonumber\\
&=  &\E\bigg(\frac{4}{\pi}\sum_{k=1}^{\infty}\frac{(-1)^{k-1}}{2k-1}\exp
\left[ -\frac{(2k-1)^{2}\pi^{2}E(1)}{8u^{2}}\right]\bigg)\label{infinite-small}
\end{eqnarray}

The Laplace transform of $E(t)$ ($E(t)$ inverse
stable subordinator of index $\beta$ ) (cf. \cite[Remark 2.8]{hop-loc})is
\begin{equation}\label{mittag-leffler}
\E[\exp(-sE(t))]=E^\beta(-st^\beta)=\sum_{n=0}^\infty \frac{(-st^\beta)^n}{\Gamma
(1+n\beta)}
\end{equation}
which is the Mittag-Leffler function evaluated at
$-st^\beta$.
And, by \cite[equation (13)]{krageloh}, for some $c>0$
\begin{equation}
\label{efie}
0\leq E^\beta(-a t^\beta)\leq
c/(1+a t^\beta).
\end{equation}
 Using Equation \eqref{efie} and the fact that Mittag-Leffler function is completely monotone  we obtain that the series in \eqref{infinite-small} is absolutely convergent, we get  by dominated convergence
 \begin{eqnarray}
 & &\E\bigg(\frac{4}{\pi}\sum_{k=1}^{\infty}\frac{(-1)^{k-1}}{2k-1}\exp
\left[ -\frac{(2k-1)^{2}\pi^{2}E(1)}{8u^{2}}\right]\bigg)\nonumber\\
&=& \frac{4}{\pi}\sum_{k=1}^{\infty}\frac{(-1)^{k-1}}{2k-1}\E\bigg(\exp
\left[ -\frac{(2k-1)^{2}\pi^{2}E(1)}{8u^{2}}\right]\bigg)\nonumber\\
&=&\frac{4}{\pi}\sum_{k=1}^{\infty}\frac{(-1)^{k-1}}{2k-1}E^\beta\bigg(
 -\frac{(2k-1)^{2}\pi^{2}}{8u^{2}}\bigg)\nonumber
\end{eqnarray}
 Therefore, using the fact that
 \begin{equation}\label{mittag-lim}
 E^\beta(-z)\sim cz^{-1}\ \mathrm{ as}\
z\to \infty
 \end{equation}
 with $c=\frac{\Gamma (\beta )\sin(\beta \pi)}{\pi}$, see, Kr\"{a}geloh \cite{krageloh} and taking the limit $u\to 0$ inside the sum using dominated convergence we obtain
\begin{eqnarray}
 \lim_{u\to 0}\frac{\P[\ \sup_{0\leq t\leq 1} |Z(t)|\leq u \ ]}{u^2}&=&\frac{4}{\pi}\sum_{k=1}^{\infty}\frac{(-1)^{k-1}}{2k-1}\lim_{u\to 0}\frac{E^\beta(-\frac{(2k-1)^{2}\pi^{2}}{8u^{2}})}{u^2}\nonumber\\
 &=&\frac{4}{\pi}\sum_{k=1}^{\infty}\frac{(-1)^{k-1}}{2k-1}\frac{8c}{(2k-1)^{2}\pi^{2}}\nonumber\\
 &=&\frac{32c}{\pi^3}\sum_{k=1}^{\infty}\frac{(-1)^{k-1}}{(2k-1)^{3}}
 \end{eqnarray}
\end{proof}
\begin{remark}
Since the exact small ball probability in Theorem \ref{small-w} is a power decay, we do not expect to get a Chung type LIL for $Z(t)$. We will obtain a Hirsch's integral test below  for $Z(t)$ using another method.
\end{remark}

Using the same conditioning method in Theorem \ref{small-w} and ideas in Aurzada and Lifshits \cite{aurzada} and Nane \cite{nane-ejp}, we also can get the following
\begin{theorem}
Let $U=\{U(t),t\geq 0\}$ be a self similar process of index $0<H<1$, and let $E(t)$ be the inverse of a stable subordinator of index $0<\beta<1$, independent of $U$. Let $\theta=1/H>0$ and suppose
\begin{equation}\label{log-small}
-\log\P\left(\sup_{0\leq t \leq 1}
|U(t)|\leq u \right)\sim k u ^{-\theta},\  \mathrm{as}\  u\to 0.
\end{equation}
Then
$$ \P\left(\sup_{0\leq t \leq 1}
|U(E(t))|\leq u \right)\approx u^{1/H}\  \mathrm{as}\  u\to 0.$$
\end{theorem}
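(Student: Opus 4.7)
The plan is to follow the conditioning strategy from Theorem~\ref{small-w}, as in Aurzada--Lifshits. Set $g(u):=\P\bigl(\sup_{0\le s\le 1}|U(s)|\le u\bigr)$. Since $E$ is continuous and nondecreasing with $E(0)=0$, $\sup_{0\le t\le 1}|U(E(t))|=\sup_{0\le s\le E(1)}|U(s)|$. Conditioning on $\sigma(E)$ and applying the $H$-self-similarity of $U$ (which gives $\sup_{0\le s\le T}|U(s)|\stackrel{d}{=} T^H\sup_{0\le s\le 1}|U(s)|$) reduces the problem to computing
\[
\P\Bigl(\sup_{0\le t\le 1}|U(E(t))|\le u\Bigr)=\E\bigl[g\bigl(u/E(1)^H\bigr)\bigr].
\]

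The hypothesis \eqref{log-small} translates into two-sided exponential bounds: for every $\varepsilon>0$ there exists $u_0>0$ with
\[
\exp\bigl(-(k+\varepsilon)v^{-1/H}\bigr)\le g(v)\le \exp\bigl(-(k-\varepsilon)v^{-1/H}\bigr),\qquad v\in(0,u_0].
\]
After the substitution $v=u/E(1)^H$ the exponent becomes $v^{-1/H}=E(1)/u^{1/H}$, which is where $\theta=1/H$ enters. To derive the upper bound I would fix $A\ge u_0^{-1/H}$ and split the expectation at $E(1)=A u^{1/H}$. On $\{E(1)\le A u^{1/H}\}$ use the trivial bound $g\le 1$; on $\{E(1)>Au^{1/H}\}$ one has $u/E(1)^H<A^{-H}\le u_0$, so the small-ball hypothesis applies and $g(u/E(1)^H)\le \exp(-(k-\varepsilon)E(1)/u^{1/H})$. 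Combining these via the Mittag-Leffler identity $\E[\exp(-sE(1))]=E^\beta(-s)$,
\[
\E\bigl[g(u/E(1)^H)\bigr]\le \P\bigl(E(1)\le Au^{1/H}\bigr)+E^\beta\bigl(-(k-\varepsilon)/u^{1/H}\bigr),
\]
and each summand will be of order $u^{1/H}$ as $u\to 0^+$.

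For the lower bound I would rely only on the monotonicity of $g$: if $x\le Au^{1/H}$ then $u/x^H\ge A^{-H}$, so $g(u/x^H)\ge g(A^{-H})>0$ (positivity of $g$ at $A^{-H}$ follows from \eqref{log-small} together with the monotonicity of $g$). Therefore
\[
\E\bigl[g(u/E(1)^H)\bigr]\ge g(A^{-H})\,\P\bigl(E(1)\le Au^{1/H}\bigr),
\]
which is again of order $u^{1/H}$ by the same tail estimate.

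The step requiring most care is the two-sided asymptotic $\P(E(1)\le y)$ of order $y$ as $y\to 0^+$. I would obtain this either via Karamata's Tauberian theorem applied to the Laplace transform $E^\beta(-s)\sim cs^{-1}$ supplied by \eqref{mittag-lim}, or directly from the identity $\P(E(1)\le y)=\P(D(1)\ge y^{-1/\beta})$ combined with the classical polynomial tail $\P(D(1)\ge z)\sim z^{-\beta}/\Gamma(1-\beta)$ of a $\beta$-stable subordinator. Once this tail is in place, the second summand in the upper bound is $\sim c u^{1/H}/(k-\varepsilon)$ by \eqref{mittag-lim}, the first is $\sim A u^{1/H}/\Gamma(1-\beta)$, and the lower bound is $\sim g(A^{-H})A u^{1/H}/\Gamma(1-\beta)$, yielding the claimed $\P\bigl(\sup_{0\le t\le 1}|U(E(t))|\le u\bigr)\approx u^{1/H}$.
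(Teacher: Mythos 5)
Your proof is correct and rests on the same underlying strategy as the paper's: condition on $E(1)$, use $H$-self-similarity of $U$ to reduce to $\E[g(u/E(1)^H)]$ with $g$ the one-parameter small-ball function, and exploit the polynomial-order information about $E(1)$ near $0$ (equivalently, $E^{\beta}(-z)\sim c/z$). The technical execution differs in two places. For the upper bound the paper upgrades the small-ball asymptotic \eqref{log-small} to a two-sided bound $c_1 e^{-k(1+\nu)v^{-\theta}}\le g(v)\le c_2 e^{-k(1-\nu)v^{-\theta}}$ valid for \emph{all} $v>0$ (absorbing the large-$v$ regime into the constants) and then applies the Mittag-Leffler estimate to the whole expectation at once; you instead keep the bound only where it is guaranteed, split at $E(1)=Au^{1/H}$, use the trivial bound $g\le 1$ on the complementary event, and control that piece with the left-tail estimate $\P(E(1)\le y)\sim y/\Gamma(1-\beta)$. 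For the lower bound the paper's ``similar argument'' symmetrically plugs the extended lower exponential bound into the Mittag-Leffler asymptotic, while you avoid \eqref{mittag-lim} entirely and use only monotonicity of $g$ together with the same tail of $E(1)$. Both routes are valid; yours is a bit more self-contained and makes the domain-of-validity issue explicit, whereas the paper's is terser. The two inputs $\P(E(1)\le y)\sim y/\Gamma(1-\beta)$ and $E^\beta(-z)\sim z^{-1}/\Gamma(1-\beta)$ are Tauberian/Abelian equivalents, so the quantitative content is identical.
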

\begin{remark}
In this theorem we get upper and lower bounds for the small ball probability due to the lack of an explicit formula for  the small ball probability for the process $U$.
\end{remark}

\begin{proof}
Let $\nu>0$. By assumption, for all $0<\epsilon<\epsilon_0=\epsilon_0(\nu)$,
$$
e^{-k(1+\nu)u^{-\theta}}\leq \P\left(\sup_{0\leq t \leq 1}
|U(t)|\leq u \right)\leq e^{-k(1-\nu)u^{-\theta}}
$$
This implies that there are constants $c_1, c_2>0$ that depend on $\epsilon_0$ such that for all $u>0$,
\begin{equation}\label{upper}
c_1e^{-k(1+\nu)u^{-\theta}}\leq \P\left(\sup_{0\leq t \leq 1}
|U(t)|\leq u \right)\leq c_2e^{-k(1-\nu)u^{-\theta}}
\end{equation}

Since $E_t$ is continuous and
nondecreasing, we use conditioning to get
\begin{eqnarray}
\P[\ \sup_{0\leq t\leq 1} |U(E(t))|\leq u \ ]& =&
\P[\ \sup_{0\leq t\leq E(1)} |U_{t}|\leq u,  ] \nonumber\\
&=& \E\left[ \P\left( \sup_{0\leq t\leq 1} |U(t)|\leq
\frac{u}{E(1)^H}|E(1) \right) \right]\nonumber\\
&\leq  &
c_2E\left[\exp\left(-k(1-\nu)u^{-\theta}E(1)\right)\right].\label{upper-upper}
\end{eqnarray}
we use the upper bound in \eqref{upper} to get the last inequality. We also obtain a lower bound for the small ball probability of $U(E)$ using the lower bound in \eqref{upper}.

Now taking limit as $u\to 0$ in \eqref{upper-upper} using equations \eqref{mittag-leffler}, \eqref{mittag-lim} and then letting $\nu\to 0$ we get the upper bound for the small ball probability of $U(E)$. With a similar argument we obtain the lower bound.
\end{proof}
Fractional
Brownian motion (fBm)  is a centered Gaussian process
$W^H=\{W^H(t), t \in \R\}$ with $W^H(0) = 0$ and covariance
function
\begin{equation}\label{Eq:fbmcov}
\E\big(W^H(s) W^H(t)\big) =\frac 1 2 \Big(|s|^{2H} + |t|^{2H} -
|s-t|^{2H}\Big),
\end{equation}
where $H \in (0, 1)$ is a constant.
$W^H$ is a self-similar process of index $H$. The small ball probability is given for $W^H$ with $\theta=1/H$ in \eqref{log-small}, see, for example, \cite{ST94}.
We obtain the small ball probability for local time fractional Brownian motion, $W^H(L(t))$, here $L(t)$ is the Local time at zero of an independent strictly stable process of index $1<\gamma\leq 2$, as a consequence of the following corollary using the fact that $E(\rho t)=L(t)$ with $\beta=1-1/\gamma$ for some constant $\rho>0$, see, for example, \cite{stone}.
\begin{corollary}
Let $W^H=\{W^H(t),t\geq 0\}$ be a fractional Brownian motion with index $0<H<1$, and let $E(t)$ be the inverse of a stable subordinator of index $0<\beta<1$, independent of $W^H$.
Then
$$ \P\left(\sup_{0\leq t \leq 1}
|W^H(E(t))|\leq u \right)\approx u^{1/H}\  \mathrm{as}\  u\to 0.$$
\end{corollary}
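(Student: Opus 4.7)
The plan is to deduce this corollary directly from the preceding theorem by taking $U = W^H$. Fractional Brownian motion $W^H$ is a self-similar process of index $H \in (0,1)$ by its very definition (this is immediate from the covariance \eqref{Eq:fbmcov}), and it is independent of $E$ by hypothesis, so the only thing that requires checking is the log small-ball assumption \eqref{log-small} with $\theta = 1/H$.

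For fractional Brownian motion this is a classical Gaussian small-ball estimate: there exists a constant $k = k(H) > 0$ such that
$$-\log \P\left(\sup_{0\leq t \leq 1} |W^H(t)| \leq u\right) \sim k\, u^{-1/H}, \qquad u \to 0.$$
I would cite this in the form stated, e.g.\ from \cite{ST94}, which is also the reference the excerpt gives right before the corollary. Once this asymptotic is in hand, hypothesis \eqref{log-small} is satisfied with $\theta = 1/H$, and the preceding theorem delivers the conclusion
$$\P\left(\sup_{0\leq t \leq 1} |W^H(E(t))| \leq u\right) \approx u^{1/H}, \qquad u \to 0,$$
with no further work.

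The main (and really only) substantive input is the fBm small-ball estimate; the self-similarity and independence conditions are built into the setup, and all the analytic content — conditioning on $E(1)$, the Mittag-Leffler Laplace transform \eqref{mittag-leffler}, and the tail \eqref{mittag-lim} — has already been absorbed in the proof of the preceding theorem. So the proof reduces to one line of verification plus one citation, and I would present it that way.
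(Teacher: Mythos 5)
Your proposal is correct and mirrors the paper exactly: the paper states, immediately before the corollary, that $W^H$ is self-similar of index $H$ and that the small-ball estimate \eqref{log-small} holds with $\theta=1/H$ (citing \cite{ST94}), then treats the corollary as an immediate specialization of the preceding theorem with $U=W^H$. Nothing more is needed.
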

The process $W^H(E)$ emerges as the scaling
limit of a  correlated continuous time random walk with heavy-tailed waiting times
between jumps \cite{mnx1}.

Strictly stable process of index $0<\alpha\leq 2$ is a self-similar process of index $1/\alpha$ and the small ball probability is given with $\theta=\a$ in \eqref{log-small}, see, for example, \cite{bertoin}.
We obtain the small ball probability estimate of $Z$ as a consequence of
\begin{corollary}
Let  $X$ be a strictly stable process  of index $0<\alpha\leq 2$ and let $E_t$ be  the inverse of a stable subordinator of index $0<\beta<1$, independent of $X$.
 Then
$$ \P\left(\sup_{0\leq t \leq 1}
|X(E(t))|\leq u \right)\approx u^{\a}\  \mathrm{as}\  u\to 0.$$
\end{corollary}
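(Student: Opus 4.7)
The plan is to obtain this as a direct corollary of the preceding theorem. The key observation is that a strictly stable L\'evy process $X$ of index $\alpha$ is self-similar with exponent $H=1/\alpha$, so setting $\theta=1/H=\alpha$ matches the scaling of the hypothesis \eqref{log-small} exactly.

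First I would verify the small ball hypothesis \eqref{log-small} for $X$. The classical small ball estimate for a strictly stable process, recalled in \cite{bertoin} (and also contained in \cite{ST94}), gives a constant $k>0$ such that
\begin{equation*}
-\log\P\left(\sup_{0\leq t\leq 1}|X(t)|\leq u\right)\sim k u^{-\alpha},\qquad u\to 0,
\end{equation*}
which is precisely \eqref{log-small} with $\theta=\alpha$.

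Next I would apply the previous theorem with $U=X$ and $H=1/\alpha$. Since $X$ and $E$ are assumed independent and $X$ is self-similar of index $H$, the hypotheses are met, and the theorem immediately yields
\begin{equation*}
\P\left(\sup_{0\leq t\leq 1}|X(E(t))|\leq u\right)\approx u^{1/H}=u^{\alpha},\qquad u\to 0,
\end{equation*}
which is the desired conclusion.

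There is no real obstacle; the whole content has been absorbed into the previous theorem, whose proof uses the conditioning identity, self-similarity to pull the factor $E(1)^{H}$ out of the sup, the two-sided bound \eqref{upper}, and the Mittag-Leffler asymptotics \eqref{mittag-leffler}--\eqref{mittag-lim}. The only mild point worth noting is that the hypothesis $0<H<1$ in the preceding theorem restricts the direct application to $\alpha>1$; for $0<\alpha\le 1$ one would either invoke the same conditioning argument ad hoc or observe that self-similarity with $H=1/\alpha\ge 1$ does not affect the calculation since only the scaling $u/E(1)^{H}$ is used. Otherwise this is a one-line verification.
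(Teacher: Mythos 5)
Your proposal matches the paper's own reasoning: the corollary is stated as an immediate consequence of the preceding theorem, taking $U=X$, $H=1/\alpha$, $\theta=\alpha$, and invoking the known small ball asymptotic for strictly stable processes from \cite{bertoin}. Your observation that the theorem's hypothesis $0<H<1$ technically excludes $0<\alpha\leq 1$ (where $H=1/\alpha\geq 1$) is a genuine mismatch the paper glosses over, and your resolution---that the conditioning argument in the theorem's proof uses only self-similarity and the Mittag-Leffler asymptotics, never the bound $H<1$---is correct.
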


\begin{remark}
Since the Laplace transform of the process $E(t)$ decays like $E(e^{-zE(1)})\sim c(\beta)/z$ as $z\to \infty$ for which the power of $z$ is independent of the index $\beta$,   the small ball probabilities does not depend on $\beta$ in the theorems in this section.
\end{remark}
For other extensions of small ball probabilities for iterated (fractional)
Brownian motions and iterated stable processes, see Aurzada and Lifshits \cite{aurzada}.


\section{Path regularity of $Z$}
We will consider the process $Z=\{Z(t),\, t \ge 0\}$ defined by
$$
Z(t)=X(E(t)), \quad \forall \ t \ge 0,
$$
where $X=W$ for $\a=2$ and $X$ is the spectrally negative stable L\'evy process with $1<\a<2$, $\nu=1$.

Let $S(t)=\sup_{0\leq s\leq t}X(s)$. Since $E(t)$ is  an increasing and
continuous process, we have
$$
\bar Z(t)=\sup_{0\leq s\leq t}Z(s)=S(E(t)).
$$
Let  $\sigma^\a$ be a stable subordinator of index $1/\a$, then
$S$ is the continuous inverse $\sigma^\a$, see Bingham \cite[Proposition 1]{bingham};.
$$
S(t)=\inf \{s:\ \sigma^\a(s)>t\}, \ (t\geq 0)
$$
 more precisely,
$$
\E\big(\exp(-s \sigma^\a(t))\big)=\exp(-t( s/c_1)^{1/\alpha})\ \ (s\geq 0, t\geq 0),
$$
where $c_1=\chi^{-1}\sec (\pi-\frac12\pi\a)$.

The inverse $D(t)$ of $E(t)$ is a stable subordinator of index $0<\beta <1$. Here $D$ has Laplace transform, $\E(e^{-sD(t)})=e^{-ts^\beta}$.
According to Bochner \cite{bochner}, $D \circ \sigma^\a$ is a stable subordinator
with index $\frac\beta \a$, and more precisely,

$$
\log \big(\E[e^{-sD\circ \sigma^\a(t)}] \big)= mt\int_0^\infty (e^{-sx}-1)x^{-(1+\beta/\a)}dx
 =-c_1^{-1/\a}ts^{\beta/\a}
$$
by integration by parts, we can show that $m=c_1^{-1/\a}(\beta/\a)[\Gamma(1-\beta/\a)]^{-1}$.
Define
$$
\mu=(\Gamma(1-\beta/\a)m)^{\a /(\a-\beta)}(\a-\beta)/\beta.
$$
Let $\lambda=(\beta/\a)/(1-\beta/\a)=\beta/(\a-\beta)$.
Plainly, $D \circ \sigma^\a$ is the right-continuous inverse of $S\circ E$.

\begin{prop}
The right-continuous inverse of the supremum of $Z(t)$ is
$$
\inf \{t: \ \bar Z(t) >\cdot\} =\sigma^{Z}(\cdot),
$$
where $\sigma^{Z}$ is a stable subordinator with Laplace exponent $s\to c_1^{-1/\a}\, s^{\beta/\a}$,
\end{prop}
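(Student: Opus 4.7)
The proposition is essentially a packaging of what has been assembled in the paragraphs just above, so the plan is to combine three ingredients: the representation of the running maximum of $Z$ in terms of $S$ and $E$, the standard inverse-of-composition identity for monotone processes, and Bochner's subordination computation that was already carried out in the display preceding the statement.

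The plan is first to justify the identity $\bar Z(t)=S(E(t))$ rigorously. Because $E$ is continuous and nondecreasing with $E(0)=0$, the image $\{E(s):0\le s\le t\}$ coincides with the interval $[0,E(t)]$, so
\[
\bar Z(t)=\sup_{0\le s\le t}X(E(s))=\sup_{0\le u\le E(t)}X(u)=S(E(t)).
\]
Next, I would invoke the general fact that if $f$ and $g$ are nondecreasing and right-continuous with right-continuous generalized inverses $f^{-1}$ and $g^{-1}$, then the right-continuous inverse of $g\circ f$ is $f^{-1}\circ g^{-1}$. By Bingham's result cited above, $\sigma^{\alpha}$ is the right-continuous inverse of $S$, and by definition $D$ is the right-continuous inverse of $E$. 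Applying the identity with $g=S$ and $f=E$ therefore gives
\[
\inf\{t:\bar Z(t)>\cdot\}=D\circ\sigma^{\alpha}(\cdot),
\]
which is exactly the sentence printed immediately before the proposition.

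Finally I would identify the law of $D\circ\sigma^{\alpha}$. Since $D$ and $\sigma^{\alpha}$ are independent subordinators, Bochner's subordination yields
\[
\mathbb{E}\bigl[e^{-s D(\sigma^{\alpha}(t))}\bigr]=\mathbb{E}\bigl[e^{-\sigma^{\alpha}(t)\,s^{\beta}}\bigr]=\exp\bigl(-t\,c_1^{-1/\alpha}s^{\beta/\alpha}\bigr),
\]
using first the Laplace transform of $D$ conditionally on $\sigma^{\alpha}$ and then the Laplace transform of $\sigma^{\alpha}$ recorded above. This exhibits $D\circ\sigma^{\alpha}$ as a stable subordinator of index $\beta/\alpha$ with Laplace exponent $s\mapsto c_1^{-1/\alpha}s^{\beta/\alpha}$, and setting $\sigma^{Z}:=D\circ\sigma^{\alpha}$ completes the proof.

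There is really no serious obstacle; the only point that requires a moment of care is the inverse-of-composition identity, where one has to choose consistently between left-continuous and right-continuous generalized inverses so that the orientation comes out right. Once that bookkeeping is done, every other step has already been established in the preceding discussion.
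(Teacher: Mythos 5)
Your proposal is correct and follows essentially the same route as the paper, which proves the proposition implicitly through the preceding discussion: $\bar Z = S\circ E$, Bingham's identification of $\sigma^\alpha$ as the inverse of $S$, and Bochner's subordination for $D\circ\sigma^\alpha$. The only cosmetic difference is that you compute the Laplace exponent by direct conditioning on $\sigma^\alpha(t)$, whereas the paper writes it via the L\'evy measure $m\,x^{-(1+\beta/\alpha)}\,dx$ and integration by parts; both give $s\mapsto c_1^{-1/\alpha}s^{\beta/\alpha}$.
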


We can now apply certain results in the literature on stable subordinators to
derive asymptotic results about $Z$.


Our first result on LIL is the following
\begin{theorem}\label{k-lil}
Let $X$ be a spectrally negative stable L\'evy process  with $1<\a\leq 2$, $\nu=1$, and let $E(t)$ be the inverse to a stable subordinator of index $0<\beta<1$ independent of $X$. Then
$$
\limsup \frac{Z(t)}{t^{\beta/\a}(\log|\log t|)^{(\a-\beta)/\a}}=(c(\beta,\a))^{\beta/\a} \ \ \mathrm{a.s.}
$$
both as  $t\to 0^+$ and $t\to\infty$. Here $c(\beta,\a)=\mu^{1/\lambda}$

\end{theorem}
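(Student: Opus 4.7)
The strategy is to exploit the preceding proposition, which identifies the right-continuous inverse $\sigma^{Z}$ of the running supremum $\bar Z(t)=\sup_{s\le t} Z(s)$ as a stable subordinator of index $\beta/\a$ with Laplace exponent $s\mapsto c_1^{-1/\a}\,s^{\beta/\a}$. Since $\bar Z$ and $\sigma^{Z}$ are generalized inverses of one another, any asymptotic result for $\sigma^{Z}$ transfers to $\bar Z$, and from there, via a short upper-semicontinuity argument, to $Z$ itself.

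First I would invoke the classical Khintchine--Fristedt LIL for stable subordinators: for $\sigma$ stable of index $\rho\in(0,1)$ with Laplace exponent $bs^{\rho}$, one has
\[
\liminf_{t\to\infty}\frac{\sigma(t)\,(\log\log t)^{(1-\rho)/\rho}}{t^{1/\rho}}=C(\rho,b)\qquad\mathrm{a.s.},
\]
together with the analogous statement as $t\to 0^{+}$ (with $\log|\log t|$ in place of $\log\log t$); see, e.g., Bertoin's monograph on L\'evy processes. Applied to $\sigma^{Z}$ with $\rho=\beta/\a$ and $b=c_1^{-1/\a}$, this gives an explicit $\liminf$--LIL for $\sigma^{Z}$ both at $+\infty$ and at $0^{+}$.

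Next I would invert. If $\sigma^{Z}(s)\gtrsim C\,s^{\a/\beta}/(\log\log s)^{(\a-\beta)/\beta}$ along its $\liminf$, then setting this lower bound equal to $t$ and solving for $s=\bar Z(t)$ (using $\log\log s\sim\log\log t$ to leading order) yields
\[
\limsup_{t\to\infty}\frac{\bar Z(t)}{t^{\beta/\a}(\log\log t)^{(\a-\beta)/\a}}=(c(\beta,\a))^{\beta/\a}\qquad\mathrm{a.s.}
\]
The definitions of $m$, $\lambda=\beta/(\a-\beta)$, and $\mu=(\Gamma(1-\beta/\a)\,m)^{\a/(\a-\beta)}(\a-\beta)/\beta$ are tuned precisely so that this inverted constant simplifies to $\mu^{(\a-\beta)/\a}=(c(\beta,\a))^{\beta/\a}$. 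The same argument, with the small-time version of the subordinator LIL, handles $t\to 0^{+}$.

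Finally I would pass from $\bar Z$ to $Z$. The upper bound is immediate from $Z(t)\le\bar Z(t)$. For the lower bound, when $\a=2$ or $X$ is spectrally negative the supremum process $S$ is continuous, so $\bar Z=S\circ E$ is continuous; and since $X$ has only downward jumps, $Z=X\circ E$ is upper semicontinuous, hence for each $t$ there exists $s_{t}\le t$ with $Z(s_{t})=\bar Z(s_{t})=\bar Z(t)$. Choosing a subsequence $t_{n}$ realizing the $\limsup$ of $\bar Z$ and the corresponding $s_{n}\le t_{n}$, the inequality $h(s_{n})\le h(t_{n})$ for the normalizing function $h$ transfers the same $\limsup$ to $Z$ along $(s_{n})$. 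The main obstacle I anticipate is the constant bookkeeping in the inversion step: one must carefully propagate $c_1^{-1/\a}$ through the Fristedt formula and verify that the resulting constant is exactly $\mu^{(\a-\beta)/\a}$; everything else is a clean reduction to the classical LIL theory for stable subordinators plus a brief semicontinuity remark.
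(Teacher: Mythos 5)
Your proposal follows essentially the same route as the paper: apply the Fristedt--Breiman $\liminf$-LIL to the stable$(\beta/\a)$ subordinator $\sigma^{Z}=D\circ\sigma^\a$ and invert (using that $t\mapsto t^{\beta/\a}(\log|\log t|)^{(\a-\beta)/\a}$ is the asymptotic inverse of $t\mapsto t^{\a/\beta}(\log|\log t|)^{(\beta-\a)/\beta}$) to obtain the $\limsup$-LIL for $\bar Z$, then replace $\bar Z$ by $Z$. One small correction on the last step: for spectrally negative $X$ one has $Z(t-)\geq Z(t)$ at jump times, so $Z$ is \emph{not} upper semicontinuous from the left and its supremum need not literally be attained; the replacement of $\bar Z$ by $Z$ follows instead from the elementary observation that for every $\epsilon>0$ there exists $s\leq t$ with $Z(s)>(1-\epsilon)\bar Z(t)$, which suffices because the normalizing function is increasing (the paper states this replacement without proof).
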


When $\a=2$, the upper bound in this large time  LIL result was proved by
Meerschaert, et al. \cite{MNX} for the case $E_{\rho t}=L_t$ is the local time at zero  of a strictly
stable process of index $1<\gamma\leq 2$ and in this case $\beta=1-1/\gamma$. Here $\rho>0$ given by
$$
\rho^{-(1-1/\gamma)}=\pi^{-1}\Gamma (1+1/\gamma)\Gamma(1-1/\gamma)\chi^{-1}Re \{[1+i\nu\tan(\pi\gamma/2)]^{-1/\gamma}\},
$$
see Stone \cite{stone}. Hence we obtain the following LIL for local time Brownian motion $W(L(t))$.
\begin{corollary}\label{local-time-lil}
Let $W$ be a Brownian motion, and  let $L(t)$ be the local time at zero of a stable process of index $1<\gamma\leq 2$, independent of  $W$. Then
$$
\limsup \frac{W(L(t))}{t^{(\gamma-1)/2\gamma}(\log|\log t|)^{(\gamma+1)/2\gamma}}=(\rho c(\beta=1-1/\gamma,2))^{(\gamma-1)/2\gamma} \ \ \mathrm{a.s.}
$$
both as  $t\to 0^+$ and $t\to\infty$.
\end{corollary}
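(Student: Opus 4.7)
The plan is to derive Corollary \ref{local-time-lil} as an immediate specialization of Theorem \ref{k-lil} via Stone's time-change identity $L(t)=E(\rho t)$ (equality in law as processes) with $\beta = 1-1/\gamma$, as already recorded in the excerpt citing \cite{stone}. So the proof will be three short moves: specialize $\alpha=2$, substitute the exponents, and rescale time by $\rho$.

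First, I apply Theorem \ref{k-lil} in the case $\alpha=2$, $X=W$, which gives, both as $t\to 0^+$ and $t\to\infty$,
\begin{equation*}
\limsup \frac{W(E(t))}{t^{\beta/2}\bigl(\log|\log t|\bigr)^{(2-\beta)/2}} \;=\; \bigl(c(\beta,2)\bigr)^{\beta/2} \qquad \text{a.s.}
\end{equation*}
Next I plug in $\beta = 1-1/\gamma = (\gamma-1)/\gamma$, so that the two exponents simplify to
\begin{equation*}
\frac{\beta}{2} \;=\; \frac{\gamma-1}{2\gamma}, \qquad \frac{2-\beta}{2} \;=\; \frac{\gamma+1}{2\gamma},
\end{equation*}
which already match the exponents appearing in the statement.

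The last move is the deterministic time change. Using $L(t) = E(\rho t)$ (as processes, by Stone), I have $W(L(t)) = W(E(\rho t))$. Substituting $\rho t$ for $t$ in the displayed LIL for $W(E(\cdot))$ yields
\begin{equation*}
\limsup \frac{W(L(t))}{(\rho t)^{(\gamma-1)/(2\gamma)}\bigl(\log|\log(\rho t)|\bigr)^{(\gamma+1)/(2\gamma)}} \;=\; \bigl(c(\beta,2)\bigr)^{(\gamma-1)/(2\gamma)}.
\end{equation*}
Since $\log|\log (\rho t)| / \log|\log t| \to 1$ both as $t\to 0^+$ and as $t\to\infty$, the denominator can be rewritten with $\log|\log t|$ in place of $\log|\log (\rho t)|$ without changing the limsup. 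Finally, I pull the constant $\rho^{(\gamma-1)/(2\gamma)}$ out of the denominator and absorb it into the constant on the right, producing $\bigl(\rho\, c(1-1/\gamma,2)\bigr)^{(\gamma-1)/(2\gamma)}$, which is exactly the value claimed in Corollary \ref{local-time-lil}.

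There is no real obstacle in this argument: all the probabilistic content is in Theorem \ref{k-lil} and Stone's identity, and the only work is the bookkeeping of exponents together with the observation that multiplying $t$ by a positive constant is invisible to the iterated-logarithm normalization. The only mild point that needs a line of comment is the replacement of $\log|\log(\rho t)|$ by $\log|\log t|$ near $t=0^+$, which is justified for all sufficiently small $t$ since $\log|\log(\rho t)| = \log|\log t + \log\rho| \sim \log|\log t|$.
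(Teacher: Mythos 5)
Your proof is correct and follows essentially the same route the paper takes: the corollary is obtained by specializing Theorem \ref{k-lil} to $\alpha=2$, invoking Stone's identity $E(\rho t)=L(t)$ with $\beta=1-1/\gamma$, and doing the exponent bookkeeping together with the observation that the constant time-scaling $\rho$ only shifts the numerator by a constant factor and leaves the iterated-logarithm term asymptotically unchanged. Your explicit remark on $\log|\log(\rho t)|\sim\log|\log t|$ is a helpful extra line that the paper leaves implicit.
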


\begin{remark}
Corollary \ref{local-time-lil} is an  extension of the results in \cite{MNX,CCFR2}.
Also the case $\gamma=2$ gives the LIL for  a version of iterated Brownian motion, $W(|B(t)|)$, which was obtained in \cite{burdzy1, ccfr1}, here $B$ is a Browninan motion independent of $W$.
\end{remark}

\begin{proof}[Proof of Theorem \ref{k-lil}]
Applying the LIL for stable subordinators
proved by  Fristedt \cite{fristedt} (see also Breiman \cite{breiman})
to $D\circ \sigma^\a$, we have
$$
\liminf \frac{D\circ \sigma^\a(t)}{t^{\a/\beta}(\log |\log t|)^{(\beta-\a)/\beta}}
=\mu^{1/\lambda}=c(\beta,\a) \ \ \mathrm{a.s.}
$$
both as  $t\to 0^+$ and $t\to\infty$.

Since $S^\a\circ E(D\circ \sigma^\a_t)=t$, we deduce that the probability that
$$
t\leq S^\a\circ E (ct^{\a/\beta}(\log |\log t|)^{(\beta-\a)/\beta})
$$
infinitely often as $t\to 0^+$ (or as $t\to \infty$) equals $1$ for
$c>c(\beta,\a)$ and $0$ for $c<c(\beta,\a)$. Recall that
$t\to t^{\beta/\a}(\log|\log t|)^{(\a-\beta)/\a}$ is an asymptotic
inverse of $t\to t^{\a/\beta}(\log |\log t|)^{(\beta-\a)/\beta} $, so
$$
\limsup \frac{S^\a\circ E(t)}{t^{\beta/\a}(\log|\log t|)^{(\a-\beta)/\a}}=(c(\beta,\a))^{\beta/\a} \ \ \mathrm{a.s.}
$$
both as  $t\to 0^+$ and $t\to\infty$.

One can replace $S \circ E(t)$  with $Z(t)$.
\end{proof}

We next obtain the Kolmogorov's integral test for $Z$.

\begin{theorem}
Let $X$ be a spectrally negative stable L\'evy process with $1<\a\leq 2$, $\nu=1$, and let $E(t)$ be the inverse to a stable subordinator of index $0<\beta<1$ independent of $X$. Then
$$
\P[Z(t)\geq t^{\beta/\a}f(t)\ \mathrm{infinitely \ often \ as }\ t\to\infty ]
=0\ \mathrm{ or } \ 1,
$$
according as
$$
\int_1^\infty f(t)^{1/(\a-\beta)} \exp(-\mu [f(t)]^{\a/(\a-\beta)})\frac{dt}{t}
 $$
converges or diverges. A similar result holds for small times by replacing the integral over $(1,\infty)$ with the integral over $(0,1)$.
\end{theorem}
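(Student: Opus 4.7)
The plan is to reduce the integral test to the classical Fristedt integral test for the lower envelope of the stable subordinator $\tau := D\circ \sigma^\a$, which by the preceding Proposition is a stable subordinator of index $\beta/\a$ with Laplace exponent $c_1^{-1/\a}s^{\beta/\a}$ and right-continuous inverse $\bar Z(t) = S\circ E(t)$. Since $X$ is spectrally negative (or a Brownian motion when $\a=2$), $X$ has no positive jumps and attains its running supremum continuously, so by the replacement argument used at the very end of the proof of Theorem \ref{k-lil} the events $\{Z(t) \geq t^{\beta/\a}f(t)\ \text{i.o.}\}$ and $\{\bar Z(t) \geq t^{\beta/\a}f(t)\ \text{i.o.}\}$ coincide up to a null set. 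Hence it suffices to prove the dichotomy with $\bar Z$ in place of $Z$.

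The key transfer step is the inversion identity $\bar Z(t) \geq u \iff \tau(u) \leq t$, which turns the event $\{\bar Z(t) \geq t^{\beta/\a} f(t)\ \text{i.o. as } t\to\infty\}$ into $\{\tau(u) \leq t(u)\ \text{i.o. as } u\to\infty\}$, where $t(u)$ is the asymptotic inverse of $u = t^{\beta/\a}f(t)$; after a standard monotonisation of $f$ this gives $t(u) \sim u^{\a/\beta}\,f(u^{\a/\beta})^{-\a/\beta}$. Fristedt's integral test (see \cite{fristedt}) for the lower envelope of a stable-$\rho$ subordinator states that $P[\tau(u) \leq u^{1/\rho}g(u)\ \text{i.o.}]$ is $0$ or $1$ according as $\int^\infty u^{-1}\,P[\tau(1) \leq g(u)]\,du$ converges or diverges. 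Applied with $g(u) = f(u^{\a/\beta})^{-\a/\beta}$ and combined with the well-known small-$y$ expansion $P[\tau(1) \leq y] \sim C y^{q}\exp(-\mu' y^{-\lambda})$, $\lambda = \beta/(\a-\beta)$, obtained from the saddle-point asymptotic of the stable density at the origin, the change of variable $v = u^{\a/\beta}$ (so $du/u = (\beta/\a)\,dv/v$) converts the integral, up to a positive constant factor, into
$$\int_1^\infty f(v)^{1/(\a-\beta)}\exp\!\big(-\mu\,f(v)^{\a/(\a-\beta)}\big)\frac{dv}{v},$$
with $\mu$ matching the constant defined just before Theorem \ref{k-lil}.

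The small-time version ($t\to 0^+$) follows by the same argument, using the corresponding Fristedt test at the origin (equivalently, using the self-similarity of $\tau$ to reduce to the large-time case). The main obstacle is the asymptotic inversion $u\leftrightarrow t$ in the transfer step: for general $f$ one must first pass to a monotone or regularly varying envelope and verify that this substitution does not affect convergence of the integral, which is a routine but technical comparison argument. A secondary subtlety is matching the polynomial prefactor through the change of variable to obtain the precise exponent $1/(\a-\beta)$ of $f$ in the integrand, which relies on a careful accounting of the small-$y$ density expansion of the stable-$\beta/\a$ distribution.
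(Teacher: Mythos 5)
Your overall strategy is the same as the paper's: invert the running supremum of $Z$ to the stable subordinator $\tau = D\circ\sigma^\a$ of index $\beta/\a$, transfer the upper-envelope statement for $\bar Z$ to a lower-envelope statement for $\tau$, invoke the refined integral test for stable subordinators, and change variables to land on the stated integral. The paper does exactly this, citing Breiman's refinement \cite{breiman} of Fristedt's LIL, so there is no difference in the high-level route.

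There is, however, a genuine gap in the quantitative step. You state ``Fristedt's integral test'' in the form
$$
\P[\tau(u)\le u^{1/\rho}g(u)\ \text{i.o.}]=0\ \text{or}\ 1\quad\text{according as}\quad\int^\infty u^{-1}\,\P[\tau(1)\le g(u)]\,du
$$
converges or diverges. That is not the correct form of the test, and the discrepancy is not cosmetic: the tail asymptotic $\P[\tau(1)\le y]\sim C\,y^{\lambda/2}\exp(-\mu y^{-\lambda})$ with $\lambda=\beta/(\a-\beta)$ produces a prefactor $g(u)^{+\lambda/2}$, whereas Breiman's test (as quoted in the paper and in Bertoin's treatment of IBM) has the prefactor $g(u)^{-\lambda/2}$. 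These differ by $g(u)^{-\lambda}$, which, after the substitution $g=f^{-\a/\beta}$, flips the sign of the exponent of $f$ in the polynomial prefactor of the final integrand. Polynomial prefactors do matter in Kolmogorov-type tests (they control the second-order corrections to the critical function), so simply replacing the distribution-function asymptotic into your version of the test does not reproduce the stated $f(t)^{1/(\a-\beta)}$. You would need to cite Breiman's actual theorem (it involves the \emph{density} of $\tau(1)$, not its distribution function, together with a Jacobian factor coming from differentiating the threshold) rather than the ad hoc ``integral of the tail'' form.

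A smaller point: Fristedt's 1964 paper gives only the LIL constant; the refined integral test is due to Breiman (1968), which is the reference the paper actually leans on. The remaining ingredients in your sketch (reduction from $Z$ to $\bar Z$ using absence of positive jumps and the monotone inversion $\bar Z(t)\ge u \Leftrightarrow \tau(u)\le t$, and the asymptotic inversion of $u=t^{\beta/\a}f(t)$ for monotone $f$) are correct and match what the paper does, though the paper is terser about them.
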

\begin{proof}
Breiman \cite{breiman} obtained a refinement
of Fristedt's LIL for stable subordinators. It states that if $\varphi:(0,\infty)\to (0,\infty) $  is a decreasing function, then
\begin{equation}\label{kolmogorov}
\P[D\circ \sigma(t)\leq t^{\a/\beta}\varphi(t) \ \mathrm{infinitely \ often \ as }\  t\to\infty]
=0\ \mathrm{or }\ 1
\end{equation}
according as the integral
$$
I(\varphi)=\int_1^\infty [\varphi(t)]^{-\lambda/2}
e^{-\mu[\varphi(t)]^{-\lambda}}\, \frac{dt}{t}
$$
converges or diverges. Here $\lambda=\beta/(\a-\beta)$.

Let $f:(0,\infty)\to (0,\infty) $ be an increasing function, then by first replacing  $t$
by $t^{\beta/\a}f(t)$ in \eqref{kolmogorov}, we get that
$$D\circ \sigma^\a({t^{\beta/\a}f(t)})\leq tf(t)^{\a/\beta}\varphi(t^{\beta/\a}f(t))
\ \mathrm{infinitely \ often \ as }\  t\to\infty
$$
and then taking $S\circ E$ of both sides we get the probability that
$$
t^{\beta/\a}f(t)\leq S \circ E(tf(t)^{\a/\beta}\varphi(t^{\beta/\a}f(t)))
 t\to\infty
$$
equals $0$ or $1$ according as $I(\varphi)<\infty$ or $I(\varphi)=\infty$.
If we now choose $\varphi$ such that
$\varphi(t^{\beta/\a}f(t))=f(t)^{-\a/\beta}$, we then see that
$$
\P[S\circ E(t)\geq t^{\beta/\a}f(t)\ \mathrm{infinitely \ often \ as }\ t\to\infty ]
=0\ \mathrm{ or } \ 1,
$$
according as  $I(\varphi)<\infty$ or $I(\varphi)=\infty$. An easy calculation
shows that the latter alternative reduces to decide whether the integral
(replace $\varphi(t)$ with $f(t)^{-\a/\beta}$)
$$
\int_1^\infty f(t)^{1/(\a-\beta)} \exp(-\mu [f(t)]^{\a/(\a-\beta)})\frac{dt}{t}
 $$
converges or diverges. We can replace $S\circ E$ by $X\circ E=Z$, which gives
 the Kolmogorov's integral test for the upper functions of $Z$ for large times. A similar
 result holds for small times by replacing the integral over $(1,\infty)$ with the integral over $(0,1)$.

\end{proof}
We next obtain Hirsch's integral test for $Z(t)$

\begin{theorem}
Let $X$ be a spectrally negative stable L\'evy process $1<\a\leq 2$, $\nu=1$, and let $E(t)$ be the inverse to a stable subordinator of index $0<\beta<1$ independent of $X$. Then
$$
\liminf_{t\to\infty}\bar Z(t)/\varphi(t)=0 \ \mathrm{or }\ \infty
$$
according as the integral $\int_1^\infty \varphi(t) t^{-(1+\beta/\a)}dt$
diverges or converges. A similar result holds for small times by replacing the integral over $(1,\infty)$ with the integral over $(0,1)$.
\end{theorem}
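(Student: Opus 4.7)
The plan is to adapt the same template used in the proof of the Kolmogorov test above: convert the lower-envelope question for $\bar Z = S\circ E$ into an upper-envelope question for its inverse $\sigma^Z = D\circ \sigma^\a$, which is a stable subordinator of index $\rho := \beta/\a$, and then appeal to a classical integral test for subordinators. Specifically, a Khintchine--Borel--Cantelli argument along a geometric sequence $u_n = 2^n$, using the stable scaling $\sigma^Z(u) \stackrel{d}{=} u^{1/\rho}\sigma^Z(1)$ together with the tail asymptotic $\P(\sigma^Z(1) > x) \sim c\, x^{-\rho}$, yields the following integral test: for any positive increasing function $g$,
\begin{equation*}
\P\bigl(\sigma^Z(u) > g(u) \text{ i.o. as } u \to \infty\bigr) = 0 \text{ or } 1
\end{equation*}
according as $\int_1^\infty g(u)^{-\rho}\, du$ converges or diverges; this is the stable-subordinator version of Hirsch's test, essentially due to Fristedt \cite{fristedt} and Breiman \cite{breiman}.

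Since $\bar Z$ and $\sigma^Z$ are mutually inverse, continuous, and strictly increasing, for every $c > 0$ and every increasing $\varphi$ we have the equivalence $\bar Z(t) < c\varphi(t) \iff t < \sigma^Z(c\varphi(t))$. Taking intersections over $c$ in a countable dense subset of $(0,\infty)$,
\begin{equation*}
\Bigl\{\liminf_{t\to\infty} \bar Z(t)/\varphi(t) = 0\Bigr\} = \bigcap_{c > 0}\bigl\{\sigma^Z(c\varphi(t)) > t \text{ i.o.}\bigr\}.
\end{equation*}
The substitution $u = c\varphi(t)$, i.e., $t = \varphi^{-1}(u/c)$, recasts the right-hand event as $\sigma^Z(u) > \varphi^{-1}(u/c)$ infinitely often as $u\to\infty$, to which the integral test above applies with $g(u) = \varphi^{-1}(u/c)$.

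To conclude, I would show that $\int_1^\infty [\varphi^{-1}(u/c)]^{-\rho}\, du$ and $\int_1^\infty \varphi(t)\, t^{-(1+\beta/\a)}\, dt$ share the same convergence behavior. A change of variable $v = \varphi(t)$ transforms the former into $c\int^\infty t^{-\rho}\varphi'(t)\, dt$, and integration by parts identifies this, modulo boundary terms, with $c\rho\int^\infty \varphi(t)\, t^{-(1+\rho)}\, dt$. In the regime of interest $\varphi(t) = o(t^{\beta/\a})$, which is necessary for a nontrivial liminf, the boundary terms are harmless. The small-time version follows by the analogous upper-function integral test for $\sigma^Z$ near $0^+$, applied in exactly the same fashion. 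I expect the main technical obstacle to be this last bookkeeping step: the change of variable presupposes sufficient regularity of $\varphi$ (absolute continuity suffices), and a general $\varphi$ has to be reduced to the monotone regular case by sandwiching with monotone majorants and minorants, which is standard but slightly tedious.
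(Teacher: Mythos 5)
Your proposal is correct and follows essentially the same route as the paper: convert the lower-envelope question for $\bar Z = S\circ E$ into an upper-envelope question for the inverse stable subordinator $\sigma^Z = D\circ\sigma^\a$ of index $\rho = \beta/\a$, invoke the Fristedt--Breiman integral test for upper functions of a stable subordinator, and then translate the test on $g = \varphi^{-1}$ back into a test on $\varphi$ by a change of variable. The paper cites Fristedt's Theorem 11.2 directly and carries out the translation by a loose ``scaling of $S\circ E$'' manipulation; your change of variable followed by integration by parts is actually the more careful way to do this step, since the paper's line $t\leq (\epsilon f(t))^{\beta/\a}t^{-\beta/\a}S\circ E(t)$ applies the self-similarity of $S\circ E$ pathwise at a time-dependent scale factor, which is not a valid operation (self-similarity is a distributional, not pathwise, identity). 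Two small imprecisions in your write-up: $\sigma^Z$ is only right-continuous (it has jumps) and $\bar Z$ is only non-decreasing (it has flats), so ``mutually inverse, continuous, and strictly increasing'' overstates the structure; and the equivalence $\bar Z(t)<c\varphi(t)\iff t<\sigma^Z(c\varphi(t))$ can fail on the null set where $\bar Z(t)=c\varphi(t)$ --- neither issue affects the i.o.\ conclusion once you take the intersection over a countable dense set of $c$, as you do, but the statements should be phrased as one-sided implications.
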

\begin{proof}
 Recall that if $f:(0,\infty)\to (0,\infty)$ is an increasing function, then
$$
\limsup_{t\to\infty}D\circ \sigma^\a(t)/f(t)=0\ \mathrm{or }\ \infty\ \ \mathrm{a.s.}
$$
according as $\int_1^\infty f(t)^{-\beta/\a}dt$ converges or diverges. See Theorem
11.2 on page 361 in Fristedt \cite{fristedt-2}. Consider first the case Limsup is equal to $0$.
Let $\epsilon>0$. Then,
$$
D\circ \sigma^\a(t)\leq \epsilon f(t) \ \ \mathrm{for\  all}\ t \ \mathrm{large\ \ a.s}.
$$
Now taking $S\circ E$ of both sides we get
$$
t\leq S\circ E(\epsilon f(t)) \ \ \mathrm{for\  all}\ t \ \mathrm{large\ \ a.s}.
$$

Using scaling of $S\circ E$ we get
$$
t\leq (\epsilon f(t))^{\beta/\a}t^{-\beta/\a}S\circ E(t) \ \ \mathrm{for\  all}\
t \ \mathrm{large\ \ a.s}.
$$
hence
$$
t^{1+\beta/\a}(\epsilon f(t))^{-\beta/\a}\leq S\circ E(t) \ \ \mathrm{for\  all}\ t \
\mathrm{large\ \ a.s}.
$$
We use  a similar argument for the case Limsup is infinite.
Hence, setting $\varphi(t)=t^{1+\beta/\a}f(t)^{-\beta/\a}$ we get
$$
\liminf_{t\to\infty}S\circ E(t)/\varphi(t)=0 \ \mathrm{or }\ \infty
$$
according as the integral $\int_1^\infty \varphi(t) t^{-(1+\beta/\a)}dt$
diverges or converges. Again, there is a similar result for small times.
\end{proof}

We next obtain a modulus of continuity result for the supremum process $\bar Z$.

\begin{theorem}Let $X$ be a spectrally negative stable L\'evy process with $1<\a\leq 2$, $\nu=1$, and let $E(t)$ be the inverse to a stable subordinator of index $0<\beta<1$ independent of $X$. Then
$$
\lim_{\epsilon\to 0^+}\sup_{0\leq t\leq 1, 0<h<\epsilon}
\frac{\bar Z({t+h})-\bar Z({t})}{h^{\beta/\a}[\log (1/h)]^{(\a-\beta)/\a}}=d(\alpha,\beta)=\left( \a c_2(\beta/\a)/\beta\right)^{(1-\beta/\a)} \ \mathrm{a.s.}
$$
where $c_2=c_2(\beta/\a)=(1-\beta/\a)(\beta/\a)^{\beta/(\a-\beta)}$.
\end{theorem}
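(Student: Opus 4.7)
The strategy is to reduce the problem to a known uniform ``small increments'' (liminf) law for stable subordinators. By the Proposition preceding Theorem \ref{k-lil}, the right-continuous inverse of $\bar{Z}=S\circ E$ is the stable subordinator $\tau:=D\circ\sigma^{\alpha}$ of index $\gamma:=\beta/\alpha\in(0,1)$ with Laplace exponent $\psi(s)=c_{1}^{-1/\alpha}s^{\gamma}$. The modulus of continuity of $\bar{Z}$ therefore corresponds, via the duality
\[
\bar{Z}(t+h)-\bar{Z}(t)\le w\ \Longleftrightarrow\ \tau(\bar{Z}(t)+w)-\tau(\bar{Z}(t))\ge h
\]
(up to countable null events coming from jumps of $\tau$), to the asymptotic behavior of the \emph{smallest} increment of $\tau$ over a window of length $w$, uniformly in the starting point $u=\bar{Z}(t)$.

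Next I would invoke the classical uniform liminf law for stable subordinators, which is a uniform-in-$t$ counterpart to Fristedt's pointwise liminf: for $\tau$ a stable subordinator of index $\gamma$ with Laplace exponent $\kappa s^{\gamma}$, there exists an explicit constant $K=K(\gamma,\kappa)$ such that
\[
\liminf_{h\to 0^{+}}\ \inf_{0\le t\le T-h}\frac{\tau(t+h)-\tau(t)}{h^{1/\gamma}\bigl(\log(1/h)\bigr)^{-(1-\gamma)/\gamma}}=K\qquad\text{a.s.},
\]
for every $T>0$; see Fristedt \cite{fristedt-2} and the subordinator chapter of Bertoin \cite{Bertoin96}. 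The exponent is the expected one, since the functional inverse of $h\mapsto h^{1/\gamma}(\log(1/h))^{-(1-\gamma)/\gamma}$ behaves asymptotically like $h\mapsto h^{\gamma}(\log(1/h))^{1-\gamma}$.

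Applying this with $\gamma=\beta/\alpha$ and $\kappa=c_{1}^{-1/\alpha}$, and using the duality from the first step together with the fact that $\bar{Z}(1)<\infty$ a.s.\ (so the starting point $u=\bar{Z}(t)$ stays in a bounded interval when $t\in[0,1]$), one deduces both halves of the modulus: for any $c_{-}<K<c_{+}$, the $\liminf\ge c_{-}$ side yields the upper bound $\bar{Z}(t+h)-\bar{Z}(t)\le c_{-}^{-\gamma}h^{\gamma}(\log(1/h))^{1-\gamma}$ for $h$ small, uniformly in $t\in[0,1]$, while the existence of a starting point $u$ at which the normalized increment of $\tau$ drops close to $c_{+}$ produces the matching lower bound. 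The main technical obstacle is the constant identification: one must carefully invert the implicit asymptotic relation $w^{1/\gamma}(\log(1/w))^{-(1-\gamma)/\gamma}=h/K$---which yields $w\sim K^{-\gamma}h^{\gamma}(\log(1/h))^{1-\gamma}$ after replacing $\log(1/w)$ by $\gamma^{-1}\log(1/h)$ modulo lower-order terms---and then match $K^{-\gamma}$ to $d(\alpha,\beta)=(\alpha c_{2}(\beta/\alpha)/\beta)^{(\alpha-\beta)/\alpha}$. The factor $c_{2}(\beta/\alpha)=(1-\beta/\alpha)(\beta/\alpha)^{\beta/(\alpha-\beta)}$ should arise from the standard Fristedt normalization $\gamma(1-\gamma)^{(1-\gamma)/\gamma}$ and the factor $\alpha/\beta$ from the Laplace-exponent constant $c_{1}^{-1/\alpha}$; this bookkeeping is essentially the only nontrivial content once the inverse-subordinator reduction is in place.
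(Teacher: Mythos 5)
Your proposal follows essentially the same route as the paper: identify the right-continuous inverse $\sigma^Z$ of $\bar Z = S\circ E$ as a stable subordinator of index $\beta/\alpha$, invoke a uniform-in-$t$ lower Lipschitz (liminf) law for that subordinator, and then take inverses to transfer it to an upper modulus of continuity for $\bar Z$. The only substantive difference is the citation: the paper uses Hawkes \cite{hawkes}, Theorem 1, for both the uniform liminf law and the inversion step (rather than Fristedt \cite{fristedt-2} or Bertoin \cite{Bertoin96}), and simply reads off the constant from Hawkes after normalizing the L\'evy measure so the Laplace exponent is $s\mapsto s^{\beta/\alpha}$, whereas you leave the constant identification as a sketch that also drops a factor of $\gamma^{-(1-\gamma)}$ from the $\log(1/w)\approx\gamma^{-1}\log(1/h)$ substitution.
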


\begin{proof}
Since $\sigma^{Z}$ is a stable subordinator
of index $\beta/\a$, Theorem 1 in Hawkes \cite{hawkes} gives (here we change the L\' evy measure of $\sigma^\a$ so that the Laplace exponent is $s\to s^{\beta/\alpha}$)
$$
\lim_{\epsilon\to 0^+}\inf_{0\leq t\leq 1, 0<h<\epsilon}\frac{\sigma^{Z}({t+h})-\sigma^{Z}({t})}{h^{\a/\beta}[\log (1/h)]^{-(\a-\beta)/\beta}}=c_2^{(\a-\beta)/\beta} \ \mathrm{a.s.}
$$
where $c_2=c_2(\beta/\a)=(1-\beta/\a)(\beta/\a)^{\beta/(\a-\beta)}$. Taking
inverses as in Hawkes \cite{hawkes} we obtain
$$
\lim_{\epsilon\to 0^+}\sup_{0\leq t\leq 1, 0<h<\epsilon}
\frac{\bar Z({t+h})-\bar Z({t})}{h^{\beta/\a}[\log (1/h)]^{(\a-\beta)/\a}}=d(\alpha,\beta)=\left( \a c_2(\beta/\a)/\beta\right)^{(1-\beta/\a)} \ \mathrm{a.s.}
$$

\end{proof}

\begin{remark}
We can also  deduce the analog of the Chung-Erd\"os-Sirao integral test
for $\bar Z$ from Theorem 7.2.1 in Mijnheer \cite{mijnheer}. We refer to
Meerschaert, Nane and Xiao \cite{MNX} for the modulus of continuity of $W(L(t))$, where $L(t)$ is the local time at zero of an independent strictly stable process $Y$ of index $1<\gamma\leq 2 $. We also refer to Khoshnevisan and Lewis \cite{khosh-lew} for the modulus of continuity of IBM $A(t)$.
\end{remark}


\begin{remark}
Bertoin \cite{bertoin}, remarked that similar results holds for $n$-iterated Brownian motions. In this regard we
get similar LIL and integral tests for $n$-iterated $E(t)$ processes.
We do not know whether this is the case for $n$-iterated spectrally negative L\'evy processes.
\end{remark}
\noindent{\bf Acknowledgment} Author would like to thank Yimin Xiao for discussion of the results in this paper.



\end{document}